\numberwithin{equation}{section}
\newtheorem{theorem}{Theorem}[section]
\newtheorem{lemma}{Lemma}[section]
\newtheorem{corollary}{Corollary}[section]
\newtheorem{remark}{Remark}[section]
\begin{document}

 \baselineskip 20pt

\textwidth=146truemm
\textheight=207truemm

\begin{center}

 {\bf \Large A spline interpretation of Eulerian numbers \footnote{The Project Supported by The National Natural
Science Foundation of China (NO.60373093,  NO.60533060, NO.10871196
and NO.10726068.) }}
\end{center}

\begin{center}
{Renhong Wang \footnote{Email: renhong@dlut.edu.cn},    Yan Xu
\footnote{Email: yanxudlut@yahoo.cn}

 {\small Institute of
Mathematical Sciences, Dalian University of Technology,\newline
 Dalian, 116024, China} }

 {Zhiqiang Xu\footnote{Email: xuzq@lsec.cc.ac.cn.}}

{\small LSEC, Inst. Comp. Math., Academy of Mathematics and System
Sciences, \newline Chinese
 Academy of Sciences, Beijing, 100080 China }
\end{center}

\begin{abstract}
In this paper, we explore the interrelationship between Eulerian
numbers and B splines. Specifically, using B splines, we give the
explicit formulas of the refined Eulerian numbers, and  descents
polynomials. Moreover,   we prove that the coefficients of descent
polynomials $D_d^n(t)$ are log-concave. This paper also provides a
new approach to study Eulerian numbers and descent polynomials.

\end{abstract}
{\it Keywords:} B splines, Eulerian numbers, Log-concavity.

 \vspace{0.5cm}

\section{Introduction}
Denote by $C^d$ the $d$-dimensional unit cube, i.e.,
           $$C^{d}\,\,=\,\,\{x\in \mathbb{R}^{d} : 0\leq x_i\leq 1,\,\, i=1,\ldots,d \}.$$
For a real number  $\lambda$, the {\it dilation} of $C^{d}$ by
$\lambda$ is the set
 $$\lambda C^{d}\,\,=\,\,\{\lambda\cdot x: x \in C^{d}\}.$$
Throughout this paper, all the volumes and mixed volumes  will be
normalized so that the volume of $C^{d}$ is given by $V(C^{d})=d!$,
where $V(\cdot)$ denotes the volume function (cf. \cite{E}). Denoted
by $T^{d}_k$ and $X^d_{n,k}$,  the $k$th slices of $C^{d}$ and $ n
\cdot C^{d}$ respectively:
\begin{eqnarray*}
T^{d}_k&=&\{x \in C^{d} : k-1\leq \sum_{i=1}^{d}x_{i}\leq k\},\\
X^d_{n,k}&=&\{x\in nC^{d}: (k-1)n+1\leq\sum_{i=1}^dx_{i}\leq kn+1\},
\end{eqnarray*}
where $n$ is a positive integer.

The Eulerian number $A_{d,k}$ is the number of permutations in the
symmetric group $S_d$ that have exactly $k-1$ descents. It seems
that Laplace  first expressed the volume of $T_k^d$ in terms of
$A_{d,k}$ as follows \cite{l}:
\begin{equation}\label{eq:1.1}
A_{d,k}\,\,=\,\, V(T^{d}_{k}).
\end{equation}

The refinement of the Eulerian number $\mathbf{A}_{d,k,j}$ is the
number of permutations in the symmetric group $S_d$ with $k$
descents and ending with the element $j$. A result due to Ehrenborg,
Readdy and  Steingr\'{\i}msson \cite{E} generalized Laplace's result
to express the mixed volumes of two adjacent slices from the unit
cube in terms of the refinement of the Eulerian numbers
$\mathbf{A}_{d+1,k,d+1-j}$, i.e.,
\begin{equation}
\mathbf{A}_{d+1,k,d+1-j}=V(T^d_k , j; T^d_{k+1}, d-j).
\end{equation}
The number $V(T_k^d,j; T_{k+1}^d,d-j)$ is the $(d-j)$-th mixed
volume of the convex bodies $T_k^d$ and $T_{k+1}^d$ (cf.
\cite{stan}). For $\lambda, \mu\geq 0$, define the Minkowski sum
 $$ \lambda T_k^d+\mu T_{k+1}^d=\{\lambda \alpha + \mu \beta :\alpha \in T_k^d,\beta \in T_{k+1}^d\}.$$
It was shown essentially by Minkowski (though he treated only the
case $d\leq 3$) that there are real numbers $V(T_{k}^d,
d-j;T_{k+1}^d,j)\geq 0$ satisfying
\begin{equation}
V(\lambda T_{k}^d+\mu T_{k+1}^d)=\sum _{j=0}^{d}\binom {d}{j}
V(T_{k}^d, d-j;T_{k+1}^d,j) \lambda^{d-j}\mu^j,
\end{equation}
for all $\lambda,\mu \geq 0$. The number $V(T_{k}^d,
d-j;T_{k+1}^d,j)$ is called the $j$-th mixed volume of $T_{k}^d$ and
$T_{k+1}^d$.

Descent polynomials, denoted by $D^n_d(t)$, are defined as
$$
\sum_{k=0}^d D(d,n,k)t^k,
$$
where $D(d,n,k)$  is the number of permutations in indexed
permutation $S^n_d$ with $k$ descents. The indexed permutation
$S^n_d$ of length $d$ and with indices in $\{0,1,\ldots,n-1\}$ is an
ordinary permutation in the symmetric group $S_d$ where each letter
has been assigned an integer between $0$ and $n-1$ (see
\cite{E,Stein}). Steingr\'{\i}msson \cite{Stein} first investigated
the relationship between $D(d,n,k)$ and the volume of the slice of
$nC^d$. To give a combinatorial interpretation for the mixed volumes
of two adjacent slices from the unit cube, Ehrenborg, Readdy and
Steingr\'{\i}msson \cite{E} found the refinement of the Eulerian
numbers are associated with $D(d,n,k)$:
\begin{equation}
D(d,n,k)=V(X^d_{n,k})=\sum_{j=0}^d{d\choose j}
\mathbf{A}_{d+1,k,d+1-j}(n-1)^j.
\end{equation}
Steingr\'{\i}msson \cite{Stein} gave the following recurrence
relations of $D(d,n,k)$,
\begin{equation}
 D(d,n,k)=(nk+1)D(d-1,n,k)+(n(d-k)+(n-1))D(d-1,n,k-1).
\end{equation}

 We now turn to the definition of B splines, and show the relation
 among Eulerian numbers,  descent polynomials and B splines.
{\it  B splines with order $d$}, denoted by $B_d(\cdot)$,   is
defined by the induction as follows
$$
B_1(x)\,\,=\,\,\begin{cases}
1 & \text{if} \>\> x\in[0,1),\\
0 & \text {otherwise},
\end{cases}
$$
and for $d\geq 2$
$$
B_d(x)\,\,=\,\, \int_0^1 B_{d-1}(x-t)dt.
$$
 A well known explicit formula for
$B_d(\cdot)$ is
\begin{equation}\label{eq:bspline}
B_d(x)=\frac{1}{(d-1)!}\sum_{i=0}^d\binom{d}{i}(-1)^i(x-i)_+^{d-1}.
\end{equation}
There is a recurrence relationship
\begin{equation}\label{eq:recbspline}
B_d(x)=\frac{x}{d-1}B_{d-1}(x)+\frac{d-x}{d-1}B_{d-1}(x-1).
\end{equation}
 From the viewpoint of the discrete
geometry, $B_d(x)$ equals the volume of slice of unit cubes (see
\cite{de boor}), i.e.,
\begin{equation}\label{eq:unitBspline}
B_d(x)=(d-1)!\cdot V(  \mathcal{H }\cap C^{d}),
\end{equation}
where $\mathcal{H}=\{y : y_1+\cdots +y_{d}=x\}$.

Based on (\ref{eq:unitBspline}), we can derive the relationship
between B splines and Eulerian numbers. To state conveniently,  we
use $[\lambda^j]f(\lambda)$ to denote the coefficient of $\lambda^j$
in $f(\lambda)$ for any given power series $f(\lambda)$. Then the
following theorem shows the relationship between  B splines and
Eulerian numbers.
\begin{theorem}\label{pr:1}
$$
  A_{d,k} \,=\, d!\cdot B_{d+1}(k);\leqno{\rm (i)}
$$
$$
   \mathbf{A}_{d+1,k,d-j+1}\, =\, d!\cdot [
\lambda^j]\left(
(\lambda+1)^dB_{d+1}(k+\frac{1}{\lambda+1})\right)/{\binom{d}{j}},\,\,
\lambda\geq 0;\leqno{\rm (ii)}
$$
$$
 D(d,n,k)\,=\, d!\cdot n^d\cdot B_{d+1}(k+\frac{1}{n}).\leqno{\rm (iii)}
$$
\end{theorem}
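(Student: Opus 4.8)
The plan is to derive all three identities from one elementary fact: $B_d$ is the probability density of the sum $S_d=U_1+\cdots+U_d$ of $d$ independent random variables, each uniform on $[0,1]$. This is immediate from the inductive definition, since $B_1$ is the uniform density and the convolution $B_d(x)=\int_0^1 B_{d-1}(x-t)\,dt$ adjoins one more independent uniform summand. Consequently, for any interval the integral $\int_a^b B_d(s)\,ds$ equals the ordinary Lebesgue volume $\mathrm{vol}_d\{y\in C^d : a\le \sum_i y_i\le b\}$. In particular $\int_{k-1}^{k}B_d(s)\,ds=\mathrm{vol}_d(T^d_k)$, and by the convolution recurrence this integral is exactly $B_{d+1}(k)$. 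Since the stated normalization means $V(\cdot)=d!\cdot\mathrm{vol}_d(\cdot)$ on full-dimensional sets, Laplace's formula $(1.1)$ gives $A_{d,k}=V(T^d_k)=d!\,\mathrm{vol}_d(T^d_k)=d!\,B_{d+1}(k)$, which is (i).

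For (iii) I would begin from $D(d,n,k)=V(X^d_{n,k})$, recorded in $(1.4)$, and evaluate the volume of the slice of $nC^d$ by rescaling. The substitution $x=ny$ carries $nC^d$ onto $C^d$ with Jacobian $n^d$ and rewrites the constraint $(k-1)n+1\le\sum_i x_i\le kn+1$ as $k-1+\tfrac1n\le\sum_i y_i\le k+\tfrac1n$. Hence $\mathrm{vol}_d(X^d_{n,k})=n^d\,\mathrm{vol}_d\{y\in C^d : k-1+\tfrac1n\le\sum_i y_i\le k+\tfrac1n\}$, and by the density interpretation the inner volume is $\int_{k-1+1/n}^{\,k+1/n}B_d(s)\,ds=B_{d+1}(k+\tfrac1n)$. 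Applying the normalization once more yields $D(d,n,k)=d!\,n^d\,B_{d+1}(k+\tfrac1n)$, which is (iii).

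Part (ii) I would obtain from (iii) by promoting it to an identity of polynomials in $n$ and comparing with the expansion $(1.4)$. Both sides of (iii) are polynomials in $n$ of degree $d$: the left side by $(1.4)$, and the right side because $B_{d+1}$ coincides with a single degree-$d$ polynomial $p$ on $[k,k+1]$, so that for $n\ge 2$ (where $k+\tfrac1n\in(k,k+1)$) one has $n^d B_{d+1}(k+\tfrac1n)=n^d p(k+\tfrac1n)=\sum_i c_i\,n^{d-i}(nk+1)^i$, visibly a polynomial in $n$ of degree $d$. Two polynomials of degree $d$ that agree at every integer $n\ge 2$ must be equal, so (iii) holds identically in $n$ and I may set the continuous value $n=\lambda+1$. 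Then (iii) reads $d!\,(\lambda+1)^d B_{d+1}\!\left(k+\tfrac{1}{\lambda+1}\right)=D(d,\lambda+1,k)$, while $(1.4)$ reads $D(d,\lambda+1,k)=\sum_{j=0}^d\binom{d}{j}\mathbf{A}_{d+1,k,d+1-j}\lambda^j$. Extracting the coefficient of $\lambda^j$ from the two right-hand sides and dividing by $\binom{d}{j}$ yields (ii).

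I expect the main obstacle to lie in Part (ii): one must check with care that the right-hand side of (iii) is a genuine polynomial in $n$ rather than merely a piecewise-polynomial function, which hinges on $k+\tfrac1n$ remaining inside the single spline interval $[k,k+1]$, and then justify the passage from an identity valid only at integers to one valid at the continuous argument $\lambda+1$. Once this analytic-continuation step is secured, the coefficient extraction is routine. (A more geometric route to (ii) would instead expand the mixed-volume generating function $V(\lambda T^d_k+\mu T^d_{k+1})$ of $(1.3)$ and match coefficients via the same rescaling; however, controlling the Minkowski-sum geometry is considerably more delicate, so the polynomial-identity argument above is preferable.)
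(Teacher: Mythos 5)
Your proofs of (i) and (iii) are essentially the paper's own: the paper likewise starts from $A_{d,k}=V(T^d_k)$ and $D(d,n,k)=V(X^d_{n,k})$, converts the slice volume into $\int B_d$ (your probabilistic density interpretation is exactly the content of the cited identity $B_d(x)=(d-1)!\cdot V(\mathcal{H}\cap C^d)$), and evaluates via the convolution recurrence; your substitution $x=ny$ is the paper's rescaling $X^d_{n,k}=nY_k$. The genuine divergence is in (ii), and there your route is correct but different. The paper argues geometrically: it quotes from \cite{E} both the mixed-volume identity $\mathbf{A}_{d+1,k,d+1-j}=V(T^d_k,j;T^d_{k+1},d-j)$ and the Minkowski-sum decomposition $X^d_{\lambda+1,k}=\lambda T^d_k+T^d_{k+1}$, and then notes that the rescaling computation from (iii) never uses integrality, so $V(X^d_{\lambda+1,k})=d!\,(\lambda+1)^d B_{d+1}\bigl(k+\tfrac{1}{\lambda+1}\bigr)$ holds for every real $\lambda\ge 0$; part (ii) is then just the defining expansion of mixed volumes plus coefficient extraction, with no interpolation step at all. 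You instead lean on the expansion $D(d,n,k)=\sum_j\binom{d}{j}\mathbf{A}_{d+1,k,d+1-j}(n-1)^j$, whose left-hand side is only combinatorially defined at integer $n$, and so you must pay with the polynomial-identity argument — which you execute correctly: checking that $n^dB_{d+1}(k+\tfrac1n)$ stays on a single polynomial piece of the spline is precisely the right point, and agreement of two degree-$\le d$ polynomials at infinitely many integers does force identity. Note, however, that your closing assessment is backwards: the Minkowski-sum route is not delicate, since the decomposition is simply cited from \cite{E} and everything else is the definition of mixed volumes; meanwhile your chosen input (1.4) is, in \cite{E}, itself a consequence of the mixed-volume identity (1.2). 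So both proofs are sound; yours trades any direct contact with mixed volumes for an interpolation argument and a more processed citation, while the paper's stays closer to the geometric source and avoids interpolation entirely.
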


 We can also
present the explicit expressions for $\mathbf{A}_{d+1,k,d-j+1}$ and
$D(d,n,k)$:
\begin{corollary}\label{pr:2}
$$
\mathbf{A}_{d+1,k,d-j+1}=\sum_{i=0}^{k}\binom{d+1}{i}(-1)^i
(k-i)^j(k-i+1)^{d-j}; \leqno{\rm (i)}
$$
 $$
D(d,n,k)=\sum_{i=0}^{k}\binom{d+1}{i}(-1)^i(n(k-i)+1)^d.\leqno{\rm
(ii) }$$
\end{corollary}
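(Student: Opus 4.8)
The plan is to derive both explicit formulas directly from Theorem \ref{pr:1} by substituting the explicit B-spline representation \eqref{eq:bspline}. Writing $B_{d+1}$ via \eqref{eq:bspline} gives
$$B_{d+1}(x)=\frac{1}{d!}\sum_{i=0}^{d+1}\binom{d+1}{i}(-1)^i(x-i)_+^{d},$$
so that the prefactor $d!$ occurring in parts (ii) and (iii) of Theorem \ref{pr:1} cancels the $1/d!$ in front of this sum. The entire argument then reduces to an algebraic manipulation of the truncated powers $(\cdot)_+^d$ together with a single application of the binomial theorem.

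For part (ii) I would start from $D(d,n,k)=d!\cdot n^d\cdot B_{d+1}(k+\tfrac{1}{n})$ and substitute the formula above with $x=k+\tfrac1n$. Absorbing the factor $n^d$ into each truncated power, I would use the identity $n^d\bigl(k+\tfrac1n-i\bigr)^d=\bigl(n(k-i)+1\bigr)^d$, valid whenever the base is nonnegative. The key observation is that $k+\tfrac1n-i\ge 0$ forces $i\le k$ (since $0<\tfrac1n\le 1$ and $i$ is an integer), so all terms with $i\ge k+1$ drop out; the boundary term $i=k+1$ vanishes because its base equals $\tfrac1n-1\le 0$, which is strictly negative for $n>1$ and exactly $0$ (hence contributes nothing to $(\cdot)_+^d$) when $n=1$. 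This yields the claimed $D(d,n,k)=\sum_{i=0}^{k}\binom{d+1}{i}(-1)^i\bigl(n(k-i)+1\bigr)^d$.

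For part (i) I would begin from $\mathbf{A}_{d+1,k,d-j+1}=d!\cdot[\lambda^j]\bigl((\lambda+1)^dB_{d+1}(k+\tfrac{1}{\lambda+1})\bigr)/\binom{d}{j}$ and again substitute \eqref{eq:bspline}, now with $x=k+\tfrac{1}{\lambda+1}$. Treating the expression as a power series in $\lambda$ near $0^+$, I note that for $0\le i\le k$ the base $k+\tfrac{1}{\lambda+1}-i$ is positive, while for $i=k+1$ it equals $-\lambda/(\lambda+1)\le 0$ and for larger $i$ it is negative, so once more only $i=0,\dots,k$ survive. The crucial simplification is
$$(\lambda+1)^d\Bigl(k-i+\tfrac{1}{\lambda+1}\Bigr)^d=\bigl((\lambda+1)(k-i)+1\bigr)^d=\bigl((k-i)\lambda+(k-i+1)\bigr)^d,$$
after which the binomial theorem gives $[\lambda^j]\bigl((k-i)\lambda+(k-i+1)\bigr)^d=\binom{d}{j}(k-i)^j(k-i+1)^{d-j}$. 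Summing over $i$, the factors $d!$ and $1/d!$, as well as $\binom{d}{j}$ and $1/\binom{d}{j}$, cancel, producing the stated formula.

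The main obstacle is not the algebra but the careful handling of the truncated power $(\cdot)_+^d$: one must justify replacing the cutoff by the plain power and pin down the correct summation range $0\le i\le k$, paying particular attention to the degenerate boundary cases $n=1$ and $\lambda=0$, where the argument of $(\cdot)_+$ is exactly zero. Once the support of the sum is correctly identified, both identities follow immediately from the binomial theorem.
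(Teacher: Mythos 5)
Your proposal is correct and takes essentially the same route as the paper: both proofs substitute the explicit B-spline formula \eqref{eq:bspline} into parts (ii) and (iii) of Theorem \ref{pr:1}, observe that the truncated powers $(\cdot)_+^d$ kill all terms with $i>k$ so the sum restricts to $0\le i\le k$, and for part (i) expand $\bigl((k-i)\lambda+(k-i+1)\bigr)^d$ by the binomial theorem to read off the coefficient of $\lambda^j$ against $\binom{d}{j}$. Your explicit handling of the boundary term $i=k+1$ (where the argument of $(\cdot)_+$ is $\le 0$, including the degenerate cases $n=1$ and $\lambda=0$) is a detail the paper passes over silently, but the underlying argument is identical.
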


In \cite{Stein},  Steingr\'{\i}msson proved that $D(d,n,k)$ for
$k=0,\ldots,d$ are unimodal. Using B splines, we have
\begin{corollary}\label{th:1}
For any $d$ and $n$, the sequence $D(d,n,k)$ for $k=0,\ldots,d$ is
log-concave.
\end{corollary}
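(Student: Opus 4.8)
The plan is to transfer the problem from the integer sequence to the continuous B-spline and then exploit the classical fact that B-splines are log-concave functions. Recall that a positive sequence $(a_k)$ is log-concave when $a_k^2\ge a_{k-1}a_{k+1}$, and that a nonnegative function $f$ on $\R$ is log-concave when $f(tx+(1-t)y)\ge f(x)^t f(y)^{1-t}$ for all $x,y\in\R$ and $t\in[0,1]$. The first step is to apply Theorem \ref{pr:1}(iii), which gives $D(d,n,k)=d!\,n^d\,B_{d+1}(k+\tfrac1n)$. Here $d!\,n^d$ is a fixed positive constant, so it cancels from both sides of every log-concavity inequality; hence it suffices to prove that the sequence $B_{d+1}(k+\tfrac1n)$, $k=0,\dots,d$, is log-concave.

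The second step is to prove that $B_{d+1}$ is a log-concave function on all of $\R$. The defining recursion $B_d(x)=\int_0^1 B_{d-1}(x-t)\,dt$ says exactly that $B_d=B_{d-1}\ast B_1$, where $\ast$ is convolution and $B_1$ is the indicator function of $[0,1)$, which is log-concave because its support is an interval. Since the convolution of two log-concave functions on $\R$ is again log-concave --- a standard consequence of the Pr\'ekopa--Leindler inequality --- an induction on $d$ shows that $B_{d+1}$ is the $(d+1)$-fold convolution of $B_1$ with itself, and is therefore log-concave.

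The third step is to read off the three-term inequality. For $1\le k\le d-1$ the three abscissae $k-1+\tfrac1n$, $k+\tfrac1n$, $k+1+\tfrac1n$ all lie in the open interval $(0,d+1)$, on which $B_{d+1}$ is strictly positive, and $k+\tfrac1n$ is their midpoint. Taking $t=\tfrac12$ in the log-concavity of $B_{d+1}$ therefore yields
$$B_{d+1}\!\left(k+\tfrac1n\right)^{2}\ \ge\ B_{d+1}\!\left(k-1+\tfrac1n\right)B_{d+1}\!\left(k+1+\tfrac1n\right),$$
which, after multiplying through by $(d!\,n^d)^2$, is precisely the required inequality $D(d,n,k)^2\ge D(d,n,k-1)D(d,n,k+1)$. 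The boundary cases $k=0$ and $k=d$ hold trivially because a neighbouring term vanishes.

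The one step that carries genuine content is the second: the assertion that convolution preserves log-concavity. Everything else is bookkeeping once Theorem \ref{pr:1}(iii) is in hand. If I wished to avoid quoting Pr\'ekopa--Leindler and keep the argument self-contained, the alternative would be a direct induction on $d$ via Steingr\'{\i}msson's recurrence for $D(d,n,k)$; however, checking that log-concavity propagates through that recurrence looks markedly more delicate, so I would favour the convolution route.
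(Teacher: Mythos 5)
Your proposal is correct and follows essentially the same route as the paper: both apply Theorem \ref{pr:1}(iii) to write $D(d,n,k)=d!\,n^d\,B_{d+1}(k+\frac{1}{n})$, reducing the claim to log-concavity of $B_{d+1}$. The one real difference is how that key fact is justified: the paper simply cites Curry and Schoenberg for Lemma \ref{le:1}(i) (concavity of $\log B_{d+1}$ on $(0,d+1)$), whereas you re-derive it by writing $B_{d+1}$ as the $(d+1)$-fold convolution of the indicator of $[0,1)$ and invoking preservation of log-concavity under convolution (Pr\'ekopa--Leindler). Your version is self-contained where the paper's is a citation, at the price of appealing to a deeper analytic tool; the paper's is shorter but leans entirely on the reference. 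One small inaccuracy in your third step: when $n=1$ and $k=d-1$, the point $k+1+\frac{1}{n}=d+1$ lies on the boundary of the support of $B_{d+1}$, not in the open interval $(0,d+1)$, so your claim of strict positivity of all three abscissae fails there --- but this is harmless, since $B_{d+1}(d+1)=0$ makes the required inequality trivial, and in any case the functional log-concavity on all of $\R$ that you established in step two already covers points where the function vanishes.
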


The two-scale property of B splines also implies the two-scale
equations of $A_{d,k}$ and $D(d,n,k)$.

\begin{corollary}\label{th:2}
The Eulerian numbers $A_{d,k}$ and $D(d,n,k)$  satisfy the two-scale
equations
$$
A_{d,k}=\sum_{j=0}^{d+1} 2^{-d}\binom {d+1}{j}A_{d,{2k-j}},
$$
and
$$
 D(d,2n,k)=\sum_{j=0}^{d+1}\binom {d+1}{j}D(d,n,{2k-j}),
$$
respectively.
\end{corollary}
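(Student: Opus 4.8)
The plan is to derive both two-scale equations from a single fact: the cardinal B spline $B_{d+1}$ satisfies the \emph{refinement} (two-scale) relation
\begin{equation}\label{eq:refine}
B_{d+1}(x)=2^{-d}\sum_{j=0}^{d+1}\binom{d+1}{j}B_{d+1}(2x-j).
\end{equation}
Once (\ref{eq:refine}) is available, the corollary reduces to evaluating it at suitably chosen arguments and invoking parts (i) and (iii) of Theorem~\ref{pr:1}.

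To prove (\ref{eq:refine}) I would use the Fourier transform. Since the integral recursion $B_d=\int_0^1 B_{d-1}(\,\cdot\,-t)\,dt$ shows $B_{d+1}=B_1\ast\cdots\ast B_1$ is the $(d+1)$-fold convolution of $B_1=\mathbf 1_{[0,1)}$, its transform is
$$\widehat{B_{d+1}}(\xi)=\left(\frac{1-e^{-i\xi}}{i\xi}\right)^{d+1}.$$
Applying the transform to the right-hand side of (\ref{eq:refine}) and using $\widehat{f(2\cdot-j)}(\xi)=\tfrac12 e^{-ij\xi/2}\widehat f(\xi/2)$ gives
$$2^{-d}\sum_{j=0}^{d+1}\binom{d+1}{j}\tfrac12 e^{-ij\xi/2}\,\widehat{B_{d+1}}(\xi/2)=2^{-(d+1)}\bigl(1+e^{-i\xi/2}\bigr)^{d+1}\widehat{B_{d+1}}(\xi/2).$$
Since $(1+e^{-i\xi/2})(1-e^{-i\xi/2})=1-e^{-i\xi}$ and the powers of $2$ cancel, this collapses to $\widehat{B_{d+1}}(\xi)$, proving (\ref{eq:refine}). (Alternatively, (\ref{eq:refine}) is the standard dilation equation for cardinal B splines, cf. \cite{de boor}.)

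For the first identity, set $x=k$ in (\ref{eq:refine}) and multiply by $d!$; by Theorem~\ref{pr:1}(i), $A_{d,m}=d!\,B_{d+1}(m)$, so
$$A_{d,k}=2^{-d}\sum_{j=0}^{d+1}\binom{d+1}{j}d!\,B_{d+1}(2k-j)=2^{-d}\sum_{j=0}^{d+1}\binom{d+1}{j}A_{d,2k-j}.$$
For the second, I would exploit the built-in shift in Theorem~\ref{pr:1}(iii), $D(d,n,k)=d!\,n^d B_{d+1}(k+\tfrac1n)$. Evaluating (\ref{eq:refine}) at $x=k+\tfrac1{2n}$ makes the dilated argument $2x-j$ equal to $2k-j+\tfrac1n$, precisely the argument appearing in $D(d,n,2k-j)$; multiplying through by $d!\,(2n)^d$ and using $(2n)^d\cdot 2^{-d}=n^d$ then yields
$$D(d,2n,k)=d!\,(2n)^d B_{d+1}\!\left(k+\tfrac1{2n}\right)=\sum_{j=0}^{d+1}\binom{d+1}{j}d!\,n^d B_{d+1}\!\left(2k-j+\tfrac1n\right)=\sum_{j=0}^{d+1}\binom{d+1}{j}D(d,n,2k-j).$$

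The only substantive step is (\ref{eq:refine}); everything after it is bookkeeping. The point needing care is the choice of shift in the second identity: $x=k+\tfrac1{2n}$ is engineered so that halving the scale turns $\tfrac1{2n}$ into $\tfrac1n$, while the dilation factor $(2n)^d$ exactly absorbs the mask normalization $2^{-d}$. I would also remark that the finite ranges of the sums are consistent because $B_{d+1}$ is supported on $[0,d+1]$, so the out-of-range terms (where $2k-j\le0$ or lies beyond the support) vanish on both sides, matching the convention that $A_{d,m}$ and $D(d,n,m)$ are zero outside $0\le m\le d$.
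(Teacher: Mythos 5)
Your proposal is correct and takes essentially the same route as the paper: both reduce the corollary to the two-scale (refinement) equation for $B_{d+1}$, evaluate it at $x=k$ and at $x=k+\tfrac{1}{2n}$, and convert via Theorem~\ref{pr:1}(i) and (iii), with $(2n)^d\cdot 2^{-d}=n^d$ handling the normalization. The only difference is that the paper simply quotes the refinement equation as Lemma~\ref{le:1}(ii) (citing Curry--Schoenberg), whereas you additionally supply a correct Fourier-transform derivation of it.
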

\begin{remark}
 In  \cite{Stein}, the recurrence formula for
$D(d,n,k)$ is presented. Based on Theorem \ref{pr:1},  the
recurrence relation  (\ref{eq:recbspline}) of B splines implies that
the recurrence relations of $A_{d,k}$ and $D(d,n,k)$.
\end{remark}

\section{ Proofs of Main Results}
To prove the main results, we need introduce some properties on B
splines.

\begin{lemma}\label{le:1}{\rm (\cite{cs})}

{\rm (i)}The function  $\log B_d(\cdot)$ is concave on the open
interval $(0,d)$;

{\rm (ii)}The B spline $B_d(x)$ satisfies the following two-scale
equation
 \begin{equation}\label{eq:8}
 B_d(x)=\sum_{j=0}^{d}2^{-d+1}\binom {d}{j}B_d(2x-j).
\end{equation}
\end{lemma}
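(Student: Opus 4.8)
The plan is to base both parts on the convolution structure of the B splines. From the defining recursion one has $B_d(x)=\int_0^1 B_{d-1}(x-t)\,dt=(B_1*B_{d-1})(x)$, and hence by induction $B_d=B_1*B_1*\cdots*B_1$ ($d$ factors), where $B_1=\mathbf 1_{[0,1)}$. Thus $B_d$ is the Irwin--Hall density of a sum of $d$ independent uniform variables, and both the log-concavity and the two-scale relation are properties that propagate through this convolution.

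For part (i) I would argue that log-concavity is inherited under convolution. The seed $B_1$ is log-concave, being the indicator of the convex set $[0,1)$. The essential analytic input is that the convolution of two log-concave functions on $\mathbb{R}$ is again log-concave --- this is the one-dimensional Pr\'ekopa--Leindler inequality. Applying it inductively to $B_d=B_1*B_{d-1}$ shows that $B_d$ is log-concave on all of $\mathbb{R}$. Since the support of $B_d$ is $[0,d]$ and $B_d>0$ on the open interval $(0,d)$, the restriction $\log B_d|_{(0,d)}$ is a finite concave function, which is precisely the claim.

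For part (ii) I would pass to the Fourier side, where convolution becomes multiplication and the two-scale identity turns into a factorization of the refinement symbol. With $\widehat{B_1}(\xi)=\frac{1-e^{-i\xi}}{i\xi}$ we get $\widehat{B_d}(\xi)=\bigl(\frac{1-e^{-i\xi}}{i\xi}\bigr)^{d}$. Because the Fourier transform of $x\mapsto B_d(2x-j)$ equals $\tfrac12 e^{-ij\xi/2}\widehat{B_d}(\xi/2)$, the asserted equation is equivalent to
\[
\widehat{B_d}(\xi)=2^{-d}\Bigl(\sum_{j=0}^{d}\binom{d}{j}e^{-ij\xi/2}\Bigr)\widehat{B_d}(\xi/2)=2^{-d}\bigl(1+e^{-i\xi/2}\bigr)^{d}\,\widehat{B_d}(\xi/2).
\]
This reduces to the elementary factorization $1-e^{-i\xi}=(1-e^{-i\xi/2})(1+e^{-i\xi/2})$, which gives $\widehat{B_d}(\xi)/\widehat{B_d}(\xi/2)=\bigl(\tfrac{1+e^{-i\xi/2}}{2}\bigr)^{d}$, matching the right-hand side; inverting the transform yields the two-scale equation. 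An alternative, transform-free route is induction on $d$: the case $d=1$ is the direct check $B_1(x)=B_1(2x)+B_1(2x-1)$, and the inductive step follows by convolving the refinement relation for $B_{d-1}$ with that for $B_1$ and collecting the product of the two mask polynomials.

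The main obstacle is part (i): everything there rests on the preservation of log-concavity under convolution, which is not elementary and is genuinely the content of the Pr\'ekopa--Leindler inequality. If a fully self-contained argument is wanted, I would instead establish that $B_1$ is a P\'olya frequency function and that such functions are closed under convolution, or work piecewise with the derivative identity $B_d'(x)=B_{d-1}(x)-B_{d-1}(x-1)$ together with the $C^{d-2}$ smoothness of $B_d$; both are more delicate than simply invoking log-concavity of convolutions. Part (ii), by contrast, is a routine symbol computation and presents no real difficulty once the Fourier transform of $B_d$ is recorded.
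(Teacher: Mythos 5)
Your proof is correct, but be aware that the paper contains no proof of this lemma at all: it is stated with the citation \cite{cs} (Curry--Schoenberg, \emph{P\'olya frequency functions IV}) and simply quoted, so any argument you supply is necessarily a different route from ``the paper's.'' Compared with the cited source, your part (i) substitutes Pr\'ekopa--Leindler for the classical total-positivity machinery: Curry and Schoenberg prove that $B_d$ is a P\'olya frequency function, using that $B_1=\mathbf 1_{[0,1)}$ is PF and that PF functions are closed under convolution via the basic composition (Cauchy--Binet) formula; log-concavity is then the order-$2$ case, since a nonnegative measurable function on $\R$ is $\mathrm{PF}_2$ exactly when it is log-concave. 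So in one dimension the ``nonelementary input'' you flag can be had more cheaply than Pr\'ekopa--Leindler, and your own fallback suggestion (PF functions closed under convolution) is precisely the content of \cite{cs}; either way your induction $B_d=B_1*B_{d-1}$, together with $B_d>0$ on $(0,d)$, correctly yields concavity of $\log B_d$ there. Your part (ii) is also sound: the symbol factorization $1-e^{-i\xi}=\bigl(1-e^{-i\xi/2}\bigr)\bigl(1+e^{-i\xi/2}\bigr)$ gives $\widehat{B_d}(\xi)=2^{-d}\bigl(1+e^{-i\xi/2}\bigr)^d\widehat{B_d}(\xi/2)$, which is the transform of the right-hand side of (\ref{eq:8}); two small points worth pinning down are that Fourier inversion gives equality only almost everywhere, so for $d\ge 2$ one should invoke continuity of both sides, while for $d=1$ your direct check with the half-open convention handles the dyadic breakpoints pointwise. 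Your transform-free alternative (the mask of a convolution is half the Cauchy product of the masks, so induction from $B_1(x)=B_1(2x)+B_1(2x-1)$ gives the symbol $2^{-d+1}(1+z)^d$) is equally valid and arguably closer in spirit to a self-contained spline-theoretic proof; the refinement equation itself is standard in subdivision theory rather than appearing in this form in \cite{cs}.
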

We are now ready to prove our results.

\begin{proof}[Proof of Theorem \ref{pr:1}]
We firstly prove (i).
 Noting that (\ref{eq:1.1}) and (\ref{eq:unitBspline}), we have
\begin{eqnarray*}
A_{d,k}=V(T^{d}_{k})&=&d!\cdot\int_{k-1}^{k}B_d(x)dx\\
 &=& d!\cdot \int_0^1 B_d(k-t)dt = d!\cdot B_{d+1}(k).
\end{eqnarray*}
We now prove {\rm (iii)} for convenience. Let
$$Y_k=\{y\in
C^{d}:(k-1)+\frac{1}{n}\leq\sum_{i=1}^dy_{i}\leq k+\frac{1}{n}\}.$$
In view of $X_{n,k}^d=nY_k$, there holds $V(X_{n,k}^d)=n^d\cdot
V(Y_k)$. From (\ref{eq:unitBspline}), we have
$$V(X_{n,k}^d)=n^d\cdot {V}(Y_k)
=d!\cdot n^d\int_{(k-1)+\frac{1}{n}}^{k+\frac{1}{n}}\! B_d(x)dx.
$$
Then (iii) follows from the integral evaluation:
\begin{eqnarray*}
D(d,n,k)\,&=&\,V(X^d_{n,k})=
d!\cdot n^d\int_{(k-1)+\frac{1}{n}}^{k+\frac{1}{n}}B_d(x)dx\\
\,&=&\,d!\cdot n^d\int_0^1 B_{d}(k+\frac{1}{n}-t)dt=d!\cdot n^d
B_{d+1}(k+\frac{1}{n}).
\end{eqnarray*}
To prove (ii), recall the equation (cf. \cite{E})
$$X^d_{\lambda+1,k}=\lambda T^d_k +T^d_{k+1}.$$
From the argument above, one has
$$
V(X^d_{\lambda+1,k})=d!\cdot (\lambda+1)^d \int_0^1
B_{d}(k+\frac{1}{\lambda+1}-t)dt.
$$
Then, we have
\begin{eqnarray*}
\mathbf{A}_{d+1,k,d-j+1}\, &=&\,V(T^d_k,j;T^d_{k+1},d-j)\\
\,&=&\,[\lambda^j]\left( V(\lambda
T^d_k+T^d_{k+1})\right)/{\binom{d}{j}}
=[\lambda^j]\left( V(X^d_{\lambda+1,k})\right)/{\binom{d}{j}}\\
\,&=&\,[\lambda^j]\left( d!\cdot (\lambda+1)^d\int_0^1
B_{d}(k+\frac{1}{\lambda+1}-t)dt\right)/\binom{d}{j}\\
\,&=&\, [\lambda^j]\left( d!\cdot (\lambda+1)^d
B_{d+1}(k+\frac{1}{\lambda+1})\right)/\binom{d}{j}.
\end{eqnarray*}
\end{proof}
\begin{proof}[Proof of Corollary \ref{pr:2}] To prove {\rm (i)}, using (ii) in Theorem 1 and
 (\ref{eq:bspline}), for any $\lambda \geq 0 $, we have
\begin{eqnarray*}& &\mathbf{A}_{d+1,k,d-j+1}\\
\,&=&\,[\lambda^j]\left(d!\cdot (\lambda+1)^dB_{d+1}(k+\frac{1}{\lambda+1})\right)/\binom{d}{j}\\
\,&=&\,[\lambda^j]
\left(\sum_{i=0}^{d+1}\binom{d+1}{i}(-1)^i\left((\lambda+1)(k-i)+1\right)_+^{d}\right)/\binom{d}{j}\\
\,&=&\,[\lambda^j]
\left(\sum_{t=0}^d\binom{d}{t}\lambda^t\sum_{i=0}^{k}\binom{d+1}{i}(-1)^i(k-i)^t(k-i+1)^{d-t}\right)/\binom{d}{j}\\
\,&=&\,\sum_{i=0}^{k}\binom{d+1}{i}(-1)^i(k-i)^j(k-i+1)^{d-j}.
\end{eqnarray*}

To prove {\rm (ii)}, combining  (iii) in Theorem 1 with
 (\ref{eq:bspline}), we obtain that
\begin{eqnarray*}
 D(d,n,k)\,&=&\,d!\cdot n^dB_{d+1}(k+\frac{1}{n})\\
 \,&=&\,\sum_{i=0}^{k}\binom{d+1}{i}(-1)^i(n(k-i)+1)^d.
\end{eqnarray*}

\end{proof}
\begin{proof}[Proof of Corollary \ref{th:1}]
According to Theorem 1, we have
$$
 D(d,n,k)\,=\, d!\cdot n^d\cdot B_{d+1}(k+\frac{1}{n}).
$$
From Lemma \ref{le:1}, $B_{d+1}(x)$ is log-concave, which implies
that  $D(d,n,k)$, $k=0,\ldots,d$, is log-concave for each fixed $d$
and $n$.
\end{proof}
\begin{proof}[Proof of Corollary \ref{th:2}]
By Theorem 1, we have
\begin{eqnarray*}
A_{d,k} \,&=&\, d!\cdot B_{d+1}(k),\\
 D(d,n,k)\,&=&\, d!\cdot n^d\cdot B_{d+1}(k+\frac{1}{n}).
\end{eqnarray*}
Since  B-splines $B_d(x)$ satisfy the following two-scale equation
\begin{eqnarray*}
 B_d(x) =\sum_{j=0}^{d}2^{-d+1}\binom {d}{j}B_d(2x-j),\\
 \end{eqnarray*}
then we have
 \begin{equation}\label{eq:re1}
  d!\cdot B_{d+1}(k)\, =\,\sum_{j=0}^{d+1}2^{-d}\binom {d+1}{j}d!\cdot B_{d+1}(2k-j)\\
   \end{equation}
and
 \begin{equation}\label{eq:re2}
 d!\cdot n^d\cdot B_{d+1}(k+\frac{1}{2n})\,=\,\sum_{j=0}^{d+1}2^{-d}
 \binom {d+1}{j} d!\cdot n^d\cdot B_{d+1}(2k+\frac{1}{n}-j).
\end{equation}
The equations (\ref{eq:re1}) and (\ref{eq:re2}) imply that the
Eulerian numbers $A_{d,k}$ and $D(d,n,k)$ satisfy the following
two-scale equations
$$
A_{d,k}\,=\,\sum_{j=0}^{d+1} 2^{-d}\binom {d+1}{j}A_{d,{2k-j}}
$$
and
\begin{eqnarray*}
D(d,2n,k)\,=\,\sum_{j=0}^{d+1}\binom {d+1}{j}D(d,n,{2k-j}),
\end{eqnarray*}
respectively.
\end{proof}

\vspace{1cm}

{\bf Acknowledgments}  The authors are most grateful to Wenchang Chu
for discussions and comments which  improve the manuscript.

\bibliographystyle{amsplain}

\end{document}